\def\Rb{\mathbb{R}}		
\newcommand{\rk}{\mathit{rank }}
\newcommand{\restr}[1]
   {\vrule height1ex width.4pt
depth1.4ex\lower1.4ex\hbox{\scriptsize $\,#1$}}
\newtheorem{theorem}{Theorem}
\newtheorem{lemma}{Lemma}
\theoremstyle{definition}
\newtheorem{rem}{Remark}
\author{Inês Cruz\thanks{Centro de Matem\'atica da Universidade do Porto, Departamento de Matem\'atica, Faculdade de Ci\^encias da Universidade do Porto, 4169-007 Porto, Portugal}\,  and M. Esmeralda Sousa-Dias\thanks{Departamento de Matemática, Center for Mathematical Analysis, Geometry, and Dynamical Systems (CAMGSD-LARSyS),  Instituto Superior Técnico, 1049-001 Lisboa, Portugal.}}
\begin{document}
\title{Discrete dynamical systems from mutation-periodic quivers: examples and reduction}
\maketitle

 \begin{abstract}
Several new mutation-periodic quivers of period higher than 1 are introduced as well as the associated discrete dynamical systems.  The reduction of these systems is developed using either a presymplectic or a Poisson approach. The presymplectic approach leads to a reduced system whose iteration map is symplectic with respect to a log symplectic form. In the Poisson approach we build a Poisson structure invariant under the iteration map, leading to a reduced system whose variables are the Casimirs of such structure.
 \end{abstract}

\noindent {\it MSC 2010:} 39A20, 13F60, 37J10, 53D17.\\
{\it Keywords:} difference equations, cluster algebras, quivers, symplectic maps, Poisson maps, reduction.

\section{Introduction} 

We study discrete dynamical systems arising in the context of the theory  of cluster algebras  through the notion of mutation-periodic quivers. These systems are given by the iterates $\varphi^{(n)}$ of a map $\varphi$ which is the composition of mutations (at the nodes of the quiver) and a permutation. The map $\varphi$ is birational and the corresponding dynamical system is nonlinear.
\medskip

Motivated by the so-called Laurent phenomenon for cluster algebras (each cluster variable is a Laurent polynomial in the initial  cluster variables) Fordy and Marsh introduced in \cite{FoMa} the notion of mutation-periodicity of a  quiver   in order to  show that $m$-periodic quivers give rise to discrete dynamical systems defined by $m$ difference equations. The classification of 1-periodic quivers as well as a few examples of 2-periodic quivers with a low number of nodes ($\leq 5$) were also obtained in that reference.

The interplay between the theory of cluster algebras and of Poisson and presymplectic structures (developed by \cite{GeShVa2, GeShVa} and surveyed in  \cite{GeShVa3}) opened new research directions in the study of the maps arising from mutation-periodic  quivers.  In particular, in \cite{ FoHo, FoHo2} presymplectic structures were used to show that  some maps  associated to 1-periodic quivers can be reduced to symplectic maps. This reduction enabled the authors  to study the integrability of these maps. Namely they showed that the reduced Somos-4 and Somos-5 recurrence relations are integrable, being the Somos-5 reduced iteration map an instance of a symmetric QRT map (see \cite{QRT}) whose dynamics have been comprehensively studied by Duistermaat in \cite{Duist}.

As the classification of $m$-periodic quivers is still open for $m>1$, the dynamical systems associated to  these quivers have not received too much attention. However, in \cite{InEs2}  we proved that maps arising from  quivers of arbitrary period may also be reduced to a symplectic map, thus generalizing  the 1-periodic case. Due to the complexity of these nonlinear systems, we see this reduction as an important step towards the study of the respective dynamics.

In this work  we present some new quivers of period higher than 1 and we obtain for each of them a reduced system using either a presymplectic or a Poisson approach.

The paper is organized as follows. We start by fixing notation and reviewing the necessary notions of the theory of cluster algebras. In Section~\ref{sec3} we present new quivers of periods 2 and 3 and describe the dynamical systems associated to them. The last section is devoted to the reduction of the iteration map of such dynamical systems. For this purpose, in the presymplectic approach we use the results and  techniques developed in \cite{InEs2} to reduced the iteration map to a symplectic map, whereas in  the Poisson approach we show that the relevant notion is that of a Poisson structure for which the iteration map is a Poisson map. To the best of our knowledge, the use of Poisson structures to reduce dynamical systems associated to 2-periodic quivers is new.

\section{Preliminary notions}\label{sec1}

 We will work in the context of coefficient free cluster algebras ${\cal A}(B)$ where $B$ is a (finite) skew-symmetric integer matrix.  Such matrix  $B=[b_{ij}]$ can be seen as representing an oriented graph with no loops nor  2-cycles, where a positive entry $b_{ij}$ stands for the number of arrows from the vertex $i$ to the vertex $j$ in such graph.  This oriented graph will be called a   {\it quiver} $Q$ and when necessary  $B_Q$ will denote the matrix  representing $Q$. Graphically, a quiver with $N$ nodes will be represented   by  a regular $N$-sided polygon with vertices labelled by  $1, 2, \ldots, N$  in clockwise direction. An arrow between  two vertices is weighted by a positive integer $w$, meaning that there are $w$ arrows between these vertices.  
 
 The basic operation in the theory of cluster algebras, introduced by Fomin and Zelevinski in  \cite{FoZe}, is called a mutation. The {\it mutation}  $\mu_k$ at node $k$  (or in direction $k$) of a quiver $Q$   transforms $Q$ according to the rules: {\it all arrows which originate or terminate at  node $k$ are reversed} and {\it all the other arrows are transformed according to the rules described in the diagrams of Figure~\ref{fig2}}.
 
\begin{figure}[ht]
\begin{center}
\psfrag{i}{$i$}
\psfrag{j}{$j$}
\psfrag{k}{$k$}
\psfrag{p}{$p$}
\psfrag{q}{$q$}
\psfrag{r}{$r$}
\psfrag{a}{$r-pq$}
\psfrag{r+pq}{$r+pq$}
\psfrag{Q}{$Q$}
\psfrag{muk}{$\mu_k$}

\resizebox{!}{7.0cm}{\includegraphics{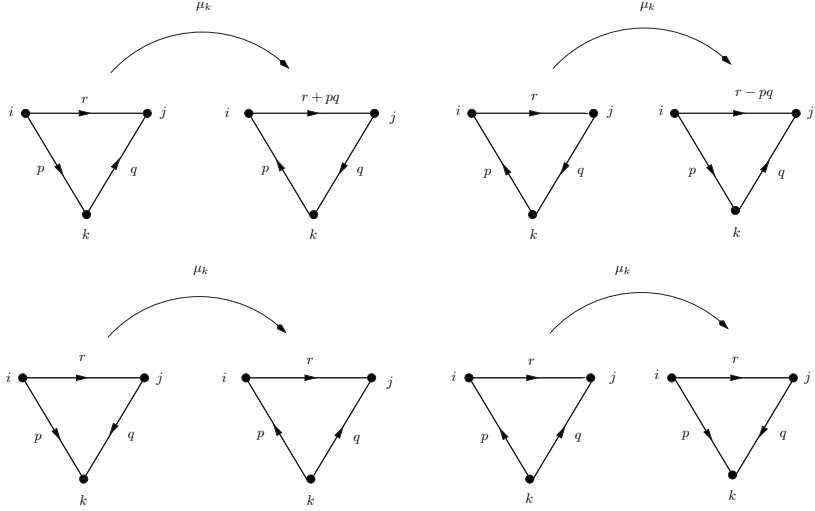}}
\caption{\label{fig2} Diagram exchange arrows for the mutation at node $k$ (in the upper-right situation, if $r-pq<0$ the arrow goes from $j$ to $i$ with weight $pq-r$).}
\end{center}
\end{figure}

In terms of the matrix $B=[b_{ij}]$ representing $Q$, the mutation $\mu_k$ translates into:
\begin{itemize}
\item $\mu_k(B) =\left[b'_{ij}\right]$, with
\begin{equation}\label{mut k2}
b'_{ij}=\begin{cases}
-b_{ij},&\text{$ (k-i)(k-j)=0$}\\
b_{ij}+\frac{1}{2} \left(|b_{ik}|b_{kj}+ b_{ik}|b_{kj}|\right),&\text{otherwise}.
\end{cases}
\end{equation} 
\end{itemize}
\begin{rem}\label{rem1}
It follows from \eqref{mut k2} that, for all  $i,j$ different from $k$,  $b'_{ij}=b_{ij}$ whenever  $b_{ik}$ and $b_{kj}$ have different signs.
\end{rem}

To each node $i$ of a quiver $Q$  one attaches a variable $x_i$ called  {\it cluster variable}. The pair $(B,\mathbf{x})$ is called the {\it initial seed} where  $\mathbf{x}=(x_1,\ldots,x_N)$ is known as the {\it initial cluster}.

The mutation $\mu_k$ acts on the initial seed $(B,\mathbf{x})$ by the action \eqref{mut k2} on $B$ and by the following action on the initial cluster  $\mathbf{x}$:
\begin{itemize}
\item $\mu_k(x_1, \ldots, x_N)=  (x'_1,\ldots, x'_N)$, with
\begin{equation}\label{mut k1}
x'_i=\begin{cases}
x_i& i\neq k\\
\displaystyle{\frac{\prod_{j: b_{kj}>0}x_j^{b_{kj}}+\prod_{j: b_{kj}<0}x_j^{-b_{kj}}}{x_k},}&i=k.
\end{cases}
\end{equation}
Whenever  one of the products in \eqref{mut k1} is taken over an empty set,  its value is assumed to be 1.
\end{itemize}

The formulae  \eqref{mut k2} and \eqref{mut k1} are called (cluster) {\it exchange relations}.
\bigbreak

\noindent{\bf Mutation-periodicity}
\bigbreak

A discrete dynamical system (in the cluster variables) can be associated to a quiver whenever the quiver has a mutation-periodicity property (notion introduced in  \cite{FoMa}). To define a mutation-periodic quiver, let us consider the permutation
$$\sigma=\left(\begin{matrix}
1&2&\cdots&N-1&N\\
2&3&\cdots&N&1
\end{matrix}\right).$$
The action of $\sigma$ on a quiver $Q$ with $N$ nodes  leaves its arrows fixed and moves the nodes in the counterclockwise direction. In terms of the matrix $B_Q$, the action $Q\mapsto\sigma Q$ is equivalent to 
$B_{\sigma Q} = \sigma^{-1} B_Q\, \sigma,$
where, slightly abusing notation, $\sigma$ also denotes the matrix representing the permutation, that is
\begin{equation}
\label{sigma}
\sigma = \begin{bmatrix}
0&1&0&\cdots&0\\
0&0&1&\cdots&0\\
\vdots&\vdots&\ddots&\ddots&\vdots\\
\vdots&\vdots&\vdots&\ddots&1\\
1&0&0&\cdots&0
\end{bmatrix} .
\end{equation}

A quiver $Q$  is said to be {\it $m$-periodic}   if  $m$  is the smallest positive integer such that
$$Q=Q(1)\stackrel{\mu_1}{\longrightarrow}Q(2)\stackrel{\mu_2}{\longrightarrow}Q(3)\cdots \stackrel{\mu_{m-1}}{\longrightarrow}Q(m)\stackrel{\mu_m}{\longrightarrow}Q(m+1)=\sigma^mQ(1).
$$
Equivalently, $Q$ is $m$-periodic if 
\begin{equation}\label{matrixper}
\mu_m\circ\cdots\circ\mu_1(B_Q)=\sigma^{-m} B_Q\, \sigma^m.
\end{equation}

In  Figure~\ref{per24} we  illustrate   the notion of mutation-periodicity  with  a 2-periodic quiver of  4 nodes ($r,s,t,p$ are positive integers with $(r,s)\neq (t,p)$ - in other cases the quiver is 1-periodic). We note  that this quiver is new, since it is more general than the family of  2-periodic quivers  with 4 nodes obtained in \cite{FoMa}.

\begin{figure}[htb]
\begin{center}
\psfrag{mu1}{$\mu_1$}
\psfrag{mu2}{$\mu_2$}
\psfrag{11}{$\rnode{c}{\pscirclebox{3}}\,\,1$}
\psfrag{22}{2\,$\rnode{c}{\pscirclebox{4}}$}
\psfrag{t+rs}{$t+rs$}
\psfrag{t}{$t$}
\psfrag{s}{$s$}
\psfrag{r}{$r$}
\psfrag{p}{$p$}
\psfrag{33}{3\,$\rnode{c}{\pscirclebox{1}}$}
\psfrag{44}{$\rnode{c}{\pscirclebox{2}}\,\, 4$}
\psfrag{1}{$1$}
\psfrag{2}{$2$}
\psfrag{3}{$3$}
\psfrag{4}{$4$}
\psfrag{a}{$1$}
\psfrag{b}{$2$}
\psfrag{c}{$3$}
\psfrag{d}{$4$}
\psfrag{Q}{$Q$}

\resizebox{!}{5cm}{\includegraphics{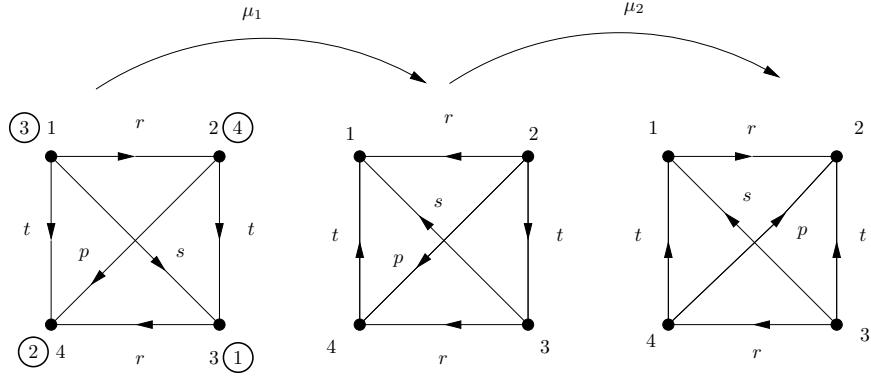}}
\caption{\label{per24} On the left, the quivers $Q$ and  $\sigma^2 Q$ (with circled nodes) are superposed. The middle and right quivers are, respectively, $\mu_1(Q)$ and $\mu_2\circ\mu_1 (Q)$. The quiver $Q$ is 2-periodic since $\sigma^2 Q=\mu_2\circ\mu_1 (Q)$. \label{figquiver}}
\end{center}
\end{figure}

\section{Discrete dynamical systems from  quivers of period higher than 1}\label{sec3}

We recall that  the classification of quivers of period greater than 1 has not yet been achieved although some examples  are known (see \cite{FoMa}). In this section  we present some new quivers of periods 2 and 3 and describe the corresponding discrete dynamical system. 
\bigbreak

\begin{itemize}
\item[A)] {\bf A 2-periodic quiver with 4 nodes}

Let us explain how one constructs the dynamical system associated to an $m$-periodic quiver using  the   2-periodic quiver of Figure~\ref{figquiver}. This quiver is represented by the matrix 
\begin{equation}\label{mat4nos}
B=\begin{bmatrix}
0&r&s&t\\
-r&0&t&p\\
-s&-t&0&r\\
-t&-p&-r&0
\end{bmatrix},
\end{equation}
where $r,s,t,p$ are positive integers with $(r,s)\neq(t,p)$. Using the exchange relations \eqref{mut k2} we have
$$\mu_1(B)=\begin{bmatrix}0&-r&-s&-t\\
r&0&t&p\\
s&-t&0&r\\
t&-p&-r&0
\end{bmatrix}\Longrightarrow \mu_2\circ\mu_1(B)=\begin{bmatrix}
0&r&-s&-t\\
-r&0&-t&-p\\
s&t&0&r\\
t&p&-r&0
\end{bmatrix},
$$
which satisfies \eqref{matrixper} with $m=2$. Taking the initial cluster $\mathbf{x}=(x_1,x_2,x_3,x_4),$
the exchange relations \eqref{mut k1} produce  for the mutation at node 1 followed by the mutation at node 2  the  clusters $\mu_1(\mathbf{x})=(x_5, x_2, x_3, x_4)$ and  $\mu_2 \circ \mu_1(\mathbf{x})=(x_5, x_6, x_3, x_4)$ with
\begin{align}\label{2p4n11}
x_5x_1&= x_2^rx_3^sx_4^t+1\\
\label{2p4n2}
 x_6x_2&=x_3^tx_4^px_5^r+1,
\end{align}
where $x_5=x'_1$ and $x_6=x'_2$. 

We note that   relations \eqref{2p4n11} and \eqref{2p4n2} can be read directly from the first row of $B$ and the second row of $\mu_1(B)$, respectively. As the quiver is 2-periodic the third row of $\mu_2\circ\mu_1(B)$ is just a permutation of the first row of $B$ implying that the cluster $\mu_3\circ\mu_2\circ\mu_1(\mathbf{x})=(x_5,x_6,x_7,x_4)$ which is given by
$$x_7x_3=x_4^rx_5^sx_6^t+1, 
$$
is just  a  2-shift of relation \eqref{2p4n11}. Likewise, mutating next at node 4 one obtains a relation which is a 2-shift of \eqref{2p4n2}. Therefore successive mutations of the 2-periodic quiver at consecutive nodes give rise to the following system of difference equations
\begin{equation}\label{4nos}
 \left\{\begin{array}{ll}
 x_{2n+3}x_{2n-1}&=x_{2n}^rx_{2n+1}^sx_{2n+2}^t+1\\
 &\\ \nonumber
 x_{2n+4}x_{2n}&=x_{2n+1}^t x_{2n+2}^px_{2n+3}^r+1
 \end{array}\qquad n=1,2,\ldots\right.
 \end{equation}
with  initial cluster $(x_1,x_2,x_3, x_4)$. The iteration map corresponding to this dynamical system is  
\begin{equation}\label{map4nos2per}
\varphi(x_1,x_2,x_3, x_4) =  (x_3,x_4,\underbrace{\frac{x_2^rx_3^sx_4^t+1}{x_1}}_{x_5},\underbrace{\frac{x_3^tx_4^px_5^r+1}{x_2}}_{x_6})
\end{equation}
 which, in terms of the initial cluster $\mathbf{x}$, is just  
 \begin{equation}\label{map4n}
 \varphi(\mathbf{x})=\sigma^2\circ\mu_2\circ\mu_1 (\mathbf{x}).
 \end{equation}
 
 \item[B)] {\bf A 2-periodic quiver with 6 nodes}
 
We now consider  the 2-periodic quiver with 6 nodes presented in \cite{InEs2}.  
 
 It is easy to check that the  matrix 
 \begin{equation}\label{matrix62per}
B=\begin{bmatrix}
0&-r&s&-p&s&-t\\
r&0& -t-rs&s&-p-r s&s\\ 
-s&  t+rs&0& -r-s(t-p)&s&-p\\
p&-s& r+s(t-p)&0& -t-r s&s\\
-s&p+r s&-s& t+r s&0&-r\\
t&-s&p&-s&r&0
\end{bmatrix}
\end{equation}
 represents a 2-periodic quiver when $r,s,t,p$ are positive integers and $r\neq t$.

\begin{rem}
 In the case $r=t$ the quiver is 1-periodic and taking the initial cluster to be $ \mathbf{x}=(x_1,x_2,\ldots ,x_6)$ one obtains a recurrence relation of order  6 which can be read directly from the first row of $B$, notably
 \begin{equation}\label{rec6nodes1p}
x_{n+6} x_n= x_{n+1}^rx_{n+3}^px_{n+5}^r+x_{n+2}^sx_{n+4}^s, \quad n=1,2,\ldots \nonumber
\end{equation}
In this 1-periodic situation the iteration map is simply $\varphi(\mathbf{x})=\sigma\circ\mu_1 (\mathbf{x})$. 
\end{rem}

When the quiver is 2-periodic (i.e.,  $r\neq t$) the corresponding system can be read from the first row of $B$ and the second row of $ \mu_1(B)$ which are respectively $(0,-r,s,-p,s,-t)$ and $(-r,0,-t,s,-p,s)$.  Therefore, the discrete dynamical system is 
\begin{equation}\label{rec6nodes}
\left\{\begin{array}{ll}
x_{2n+5}x_{2n-1}&=x_{2n}^rx_{2n+2}^px_{2n+4}^t+x_{2n+1}^sx_{2n+3}^s\\
&\\ \nonumber
x_{2n+6}x_{2n}&=x_{2n+1}^t x_{2n+3}^px_{2n+5}^r+x_{2n+2}^sx_{2n+4}^s
\end{array}n=1,2,\ldots
\right.
\end{equation}
 The respective iteration map is again given by \eqref{map4n}, or explicitly by 
\begin{equation}\label{map6nos2per}
\varphi(x_1,x_2,x_3,x_4,x_5,x_6) =  (x_3,x_4,x_5,x_6,\underbrace{\frac{x_2^rx_4^px_6^t+x_3^sx_5^s}{x_1}}_{x_7},\underbrace{\frac{x_3^tx_5^px_7^r+x_4^sx_6^s}{x_2}}_{x_8} ). \nonumber
\end{equation}

\item[C)] {\bf A 2-periodic quiver with 5 nodes}

This quiver was considered in \cite{FoMa} where the corresponding system was described as a recurrence relation of order 2 in the plane subject to a boundary condition. Our interpretation of the system does not require any boundary condition. 

Let  $\mathbf{x}=(x_1,x_2, x_3,x_4,x_5)$ be  the initial cluster,  $r$ and $s$   distinct  positive integers, and $B$ the  matrix
\begin{equation}\label{B5nodes2per}
B=\begin{bmatrix}
0&-r&1&1&-s\\
r&0&-r-s&1-r&1\\
-1&r+s&0&-r-s&1\\
-1&-r-1&r+s&0&-r\\
s&-1&-1&r&0
\end{bmatrix}.
\end{equation}
 Since $B$ represents a 2-periodic quiver and the second row of $\mu_1(B)$ is $(-r,0,-s,1,1)$, the associated discrete dynamical system is
 \begin{equation}\label{5nosper2}
 \left\{\begin{array}{ll}
 x_{2n+4}x_{2n-1}&=x_{2n}^rx_{2n+3}^s+x_{2n+1}x_{2n+2}\\  \nonumber
 x_{2n+5}x_{2n}&=x_{2n+1}^sx_{2n+4}^r+x_{2n+2}x_{2n+3}
 \end{array}\qquad n=1,2,\ldots\right.
 \end{equation}
This is a system of difference equations whose iteration map is still given by \eqref{map4n}. Explicitly, 
\begin{equation}\label{map5nos2per}
\varphi(x_1,x_2,x_3,x_4,x_5) =  (x_3,x_4,x_5,\underbrace{\frac{x_2^rx_5^s+x_3x_4}{x_1}}_{x_6},\underbrace{\frac{x_3^sx_6^r+x_4x_5}{x_2}}_{x_7} ). 
\end{equation}
 
 \bigbreak
 
 \item[D)] {\bf A 3-periodic quiver with 3 nodes}
 
 Our last example is a 3-periodic quiver with  3 nodes. Considering $r,s$ and $t$ to be positive integers, Remark~\ref{rem1} and the fact that $\sigma^3=Id$  enable us to construct  such a quiver. Indeed the matrix
 \begin{equation}\label{3per}
 B=\begin{bmatrix}
 0&r&s\\
 -r&0&t\\
 -s&-t&0
 \end{bmatrix}
 \end{equation}
 represents  a 3-periodic quiver since $B=\mu_3\circ\mu_2\circ\mu_1 (B)$. As the second row of $\mu_1(B)$ is $(r,0,t)$ and the third row $\mu_2\circ\mu_1(B)$ is $(s,t,0)$, the associated system is given by
 \begin{equation}\label{dyn3per}
 \left\{\begin{array}{ll}
 x_{3n+1}x_{3n-2}&=x_{3n-1}^rx_{3n}^s+1\\
x_{3n+2}x_{3n-1}&=x_{3n}^t\,x_{3n+1}^r+1\\
x_{3n+3}x_{3n}&=x_{3n+1}^sx_{3n+2}^t+1
 \end{array}\right.\qquad n=1,2,\ldots
 \end{equation}
 The respective iteration map is $\varphi(\mathbf{x})=\mu_3\circ\mu_2\circ\mu_1(\mathbf{x}).$
 More precisely, 
 \begin{equation}\label{map3nos3per}
 \nonumber
\varphi(x_1,x_2,x_3) =  (\underbrace{\frac{x_2^rx_3^s+1}{x_1}}_{x_4},\underbrace{\frac{x_3^tx_4^r+1}{x_2}}_{x_5},\underbrace{\frac{x_4^sx_5^t+1}{x_3}}_{x_6} ). 
\end{equation}

\begin{rem} 
\label{rem3}
The system \eqref{dyn3per} associated to the quiver represented by \eqref{3per} was obtained  under  the standard framework of the theory of cluster algebras. However we can also associate to \eqref{3per} another system for which the reduction procedure, to be developed in the next section, works exactly in the same way as for \eqref{dyn3per}. Notably, the  system given by the difference equations
 $$\label{dyn3per2}
 \left\{\begin{array}{ll}
 x_{3n+1}x_{3n-2}&=x_{3n-1}^rx_{3n}^s\\
x_{3n+2}x_{3n-1}&=x_{3n}^t\,x_{3n+1}^r\\
x_{3n+3}x_{3n}&=x_{3n+1}^sx_{3n+2}^t
 \end{array}\right.\qquad n=1,2,\ldots
$$
whose iteration map is now
\begin{align} \nonumber
\varphi(x_1,x_2,x_3)&=\left (\frac{x_2^rx_3^s}{x_1},\frac{x_3^tx_4^r}{x_2},\frac{x_4^sx_5^t}{x_3}\right ) \\ \label{per3not}
&=\left(\frac{x_2^rx_3^s}{x_1}, \frac{x_2^{r^2-1}x_3^{t+ rs}}{x_1^r},\frac{x_2^{t r^2-t+r s}x_3^{t^2+trs+s^2-1}}{x_1^{s+t r}}\right). 
\end{align}
 \end{rem}
\end{itemize}

\section{Reduction of the discrete dynamical systems associated to periodic quivers}\label{sec4}

Poisson and presymplectic structures compatible with a cluster algebra were introduced in \cite{GeShVa2} and since then applied to a wide range of subjects  such  as Teichm\"uller spaces,  commutative and non-commutative algebraic geometry and discrete $Y$-systems. We recommend the survey \cite{GeShVa3} and references therein for an account of  several  applications of the theory of cluster algebras.

In the context of discrete dynamical systems arising from periodic quivers,  presymplectic structures  have been used to reduce the original system to a system defined by  a symplectic iteration map. Notably,  using presymplectic structures  \cite{FoHo} and \cite{FoHo2} performed  reduction on systems associated to 1-periodic quivers  and applied it to study the integrability of these systems.  Still in the realm of 1-periodic quivers, in \cite{InEs} we have used both Poisson structures and presymplectic structures to reduce this type of difference equations. 

Concerning quivers of period higher than 1, we showed in \cite{InEs2} that the presymplectic approach  can still be used to reduce systems associated to higher periodic quivers to systems whose iteration map is symplectic. 

 In the present work we will use both  presymplectic  and Poisson structures  to carry out reduction for the systems presented in the previous section. We remark that,  unlike the presymplectic approach  (which can always be used) the Poisson approach may fail to apply. 

\subsection{Reduction via presymplectic forms}

We now describe how to reduce a discrete dynamical system associated to a periodic quiver using a presympletic form. This presymplectic approach is based on our results in \cite{InEs2}. 

Let $Q$ be an $m$-periodic  quiver with $N$ nodes, $B=[b_{ij}]$ the integer matrix representing $Q$ and $(x_1,\ldots, x_N)\in\Rb_+^N$ the initial cluster. The matrix $B$ can be used to define the  presymplectic form
\begin{equation}
\label{ssf}
\omega = \sum_{1\leq i<j\leq N} b_{ij} \frac{dx_i}{x_i}\wedge \frac{dx_j}{x_j}.
\end{equation}
This form will be called  the {\it log presymplectic form associated to $Q$}, since in the coordinates $v_i=\log x_i$ it takes the form
\begin{equation}
\label{ssf2}
\nonumber
\omega = \sum_{1\leq i<j\leq N} b_{ij} dv_i\wedge dv_j.
\end{equation}
 An $m$-periodic quiver gives rise to a discrete dynamical system defined by the iterates $\varphi^{(n)}$ of the map 
 \begin{equation}
 \label{fi}
 \varphi= \sigma^m\circ\mu_m\circ\cdots\circ\mu_1 
 \end{equation} 
 where $\sigma$ is the permutation \eqref{sigma} and $\mu_k$ is the mutation at node $k$ given by \eqref{mut k2} and \eqref{mut k1}.

Theorem 1 of \cite{InEs2} states that  in the $m$-periodic case $\omega$ is invariant under $\varphi$, that is 
\begin{equation}\label{omegainv}
\nonumber
\varphi^*\omega=\omega,
\end{equation}
where $\varphi^*$ stands for  the pullback by $\varphi$. Using this invariance property  and Darboux's theorem for presymplectic forms we proved  that when $\operatorname{rank} B = 2k<N$ the map $\varphi$ can be reduced to a symplectic map $\hat{\varphi}$. Let us  give the precise statement of this result.
\begin{theorem}[\cite{InEs2}]
\label{reducth}
Let $Q$ be an $m$-periodic quiver with $N$ nodes, $B$ the skew-symmetric matrix representing it and $\varphi$ the associated  iteration map in \eqref{fi}. If $\rk\, B=2k<N$ then there exist a submersion $\pi:  \Rb_+ ^N \longrightarrow   \Rb_+^{2k}$ and a map  $\hat{\varphi}: \Rb_+ ^{2k} \longrightarrow  \Rb_+ ^{2k}$ such that 
the following diagram is commutative
$$\xymatrix{ \Rb_+ ^N  \ar @{->} [r]^\varphi\ar @{->} [d]_{\pi} & \Rb_+ ^N \ar @{->}[d]^{\pi}\\
 \Rb_+ ^{2k}  \ar @{->} [r]_{\hat{\varphi}} &  \Rb_+ ^{2k}}$$
Moreover,  $\hat{\varphi}$ is symplectic with respect to  the \emph{canonical log symplectic form}
\begin{equation}
\label{omega0}
\omega_0=\frac{dy_1}{y_1}\wedge \frac{dy_2}{y_2} + \cdots + \frac{dy_{2k-1}}{y_{2k-1}}\wedge \frac{dy_{2k}}{y_{2k}},
\end{equation}
that is $\hat{\varphi}^* \omega_0=\omega_0$.
\end{theorem}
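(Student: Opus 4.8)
The plan is to carry out the reduction in the logarithmic coordinates $v_i=\log x_i$, in which the coordinatewise exponential identifies $\Rb_+^N$ with $\Rb^N$ and $\omega$ becomes the \emph{constant-coefficient} closed $2$-form $\omega=\sum_{i<j}b_{ij}\,dv_i\wedge dv_j$. First I would observe that over the positive orthant $\varphi$ is an honest diffeomorphism, not merely a birational map: in each exchange relation \eqref{mut k1} the numerator is a sum of positive monomials (possibly plus $1$), hence strictly positive on $\Rb_+^N$, and one divides by $x_k>0$; moreover each $\mu_k$ is an involution on clusters, so $\varphi^{-1}=\mu_1\circ\cdots\circ\mu_m\circ\sigma^{-m}$ is equally well defined on $\Rb_+^N$. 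Since $\omega$ has constant coefficients, its pointwise kernel is the \emph{fixed} subspace $K=\ker B\subseteq\Rb^N$, of dimension $N-\rk B=N-2k$; the distribution $p\mapsto K$ is therefore trivially integrable, with leaves the affine subspaces $p+K$, and $\pi$ will be built from the projection onto the quotient $\Rb^N/K\cong\Rb^{2k}$.

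Next I would use the invariance $\varphi^{*}\omega=\omega$ (Theorem~1 of \cite{InEs2}) to show that $\varphi$ respects this foliation. If $\iota_u\omega_p=0$, then $\varphi^{*}\omega=\omega$ forces $\omega_{\varphi(p)}(\varphi_{*}u,\cdot)=0$ on the image of $\varphi_{*}$, which is all of $T_{\varphi(p)}\Rb^N$ since $\varphi$ is a diffeomorphism; hence $\varphi_{*}u\in\ker\omega_{\varphi(p)}=K$, so $\varphi_{*}$ preserves $K$ at every point and $\varphi$ carries each leaf $p+K$ onto a leaf. Consequently $\varphi$ descends to a well-defined smooth map $\hat\varphi$ on the leaf space by $\hat\varphi(q):=\pi(\varphi(p))$ for any $p\in\pi^{-1}(q)$ (a diffeomorphism, as $\varphi^{-1}$ descends the same way), giving the commuting square. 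Moreover $\omega$ is basic for the foliation, since $\iota_X\omega=0$ by definition of $K$ and $\mathcal{L}_X\omega=d\iota_X\omega+\iota_Xd\omega=0$ for $X$ tangent to the leaves; so $\omega$ drops to a closed nondegenerate form $\hat\omega$ with $\pi^{*}\hat\omega=\omega$, and pulling this identity through the commuting square together with the injectivity of $\pi^{*}$ (as $\pi$ is a submersion) yields $\hat\varphi^{*}\hat\omega=\hat\omega$.

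It then remains to bring $\hat\omega$ into the canonical form \eqref{omega0} by a \emph{monomial} change of coordinates, so that the reduced form is the log symplectic form and not merely some symplectic form. For this I would invoke the normal form of an alternating integer matrix: there is $M\in\mathrm{GL}_N(\Z)$ with $M^{\mathsf T}BM=\bigoplus_{l=1}^{k}\bigl(\begin{smallmatrix}0&d_l\\-d_l&0\end{smallmatrix}\bigr)\oplus 0_{N-2k}$ for positive integers $d_1,\dots,d_k$. Setting $w=M^{-1}v$ (equivalently, a monomial substitution $w_a=\log\prod_i x_i^{(M^{-1})_{ai}}$ in the cluster variables) makes $\omega=\sum_{l=1}^{k}d_l\,dw_{2l-1}\wedge dw_{2l}$ and $K=\{w_1=\cdots=w_{2k}=0\}$, so the leaf space is coordinatized by $(w_1,\dots,w_{2k})$. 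Defining $\pi(x_1,\dots,x_N)=(y_1,\dots,y_{2k})$ with $\log y_{2l-1}=d_l\,w_{2l-1}$ and $\log y_{2l}=w_{2l}$ — a monomial submersion $\Rb_+^N\to\Rb_+^{2k}$ with integer exponents — turns $\hat\omega$ into exactly $\omega_0$, whence $\hat\varphi$ is a symplectomorphism of $(\Rb_+^{2k},\omega_0)$.

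The step I expect to be the real obstacle is this last one. The soft part — that an invariant, constant-rank presymplectic form descends along the foliation tangent to its kernel and drags $\varphi$ down to a symplectic map on the reduced space — is automatic once one has $\varphi^{*}\omega=\omega$. What is delicate is the integrality: one must use the \emph{integral} Smith-type normal form of the skew matrix $B$, rather than a real Darboux basis, and then absorb the elementary divisors $d_l$ into the rescaling $y_{2l-1}\mapsto y_{2l-1}^{\,d_l}$, so that $\pi$ remains a genuine map of the positive orthant defined by integer exponents and the reduced form is precisely $\omega_0$. The full details are carried out in \cite{InEs2}.
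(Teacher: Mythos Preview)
Your argument is correct and follows the same overall strategy that the paper attributes to \cite{InEs2}: use the invariance $\varphi^{*}\omega=\omega$ together with a Darboux-type reduction of the constant-rank closed $2$-form $\omega$ in the logarithmic coordinates $v_i=\log x_i$. The paper describes the proof as non-constructive, invoking Darboux's theorem for presymplectic forms as a black box, whereas you unpack it explicitly (constant kernel distribution, $\varphi$ sends leaves to leaves, $\omega$ is basic and descends); in the constant-coefficient situation at hand these are the same argument.

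The one genuine difference is in how the reduced coordinates are produced. The paper, following E.~Cartan's constructive Darboux procedure (Theorem~2.3 of \cite{LiMa}), builds the $y_i$ from a basis of $\operatorname{range}(B)$ assembled out of rows of $B$; as the worked examples make plain (e.g.\ $y_1=x_2x_3^{s/r}$ in example~i)), this yields monomials with \emph{rational} exponents, which is harmless on $\Rb_+^N$. You instead pass through the integral skew normal form of $B$ over $\Z$ and absorb the elementary divisors $d_l$, so that $\pi$ has \emph{integer} exponents. That buys you a slightly sharper conclusion than the theorem actually claims, but the integrality you single out as ``the real obstacle'' is not in fact required for the statement as written; the paper is content with rational-exponent monomials on the positive orthant.
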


The proof of the above theorem is non constructive since it is based on Darboux's theorem for closed 2-forms of constant rank (see for instance \cite{AbMa} and \cite{SS}).  However, by considering the coordinates $v_i=\log x_i$ and an adequate basis for $\operatorname{range}\, (B)$ given  by the constructive proof  of Theorem 2.3 (E. Cartan) in \cite{LiMa}, the functions $y_1, y_2, \ldots ,y_{2k}$ and the reduced iteration map $\hat{\varphi}$ can be explicitly computed. 
\bigbreak

We apply Theorem~\ref{reducth} to  reduce the iteration map corresponding to some  of the quivers presented in last section.  We note that in the case of quivers with an odd number of nodes it is always possible to  perform reduction since the rank of the matrix representing the quiver is never maximal.  

\begin{itemize}
\item[i)] {\bf Reduction of the  3-periodic quiver in D)}

Let us consider the quiver represented by the matrix $B$ in \eqref{3per} and the initial cluster $(x_1,x_2, x_3)$. The log presymplectic form \eqref{ssf} is
\begin{align*}
\omega&= r \frac{dx_1}{x_1}\wedge \frac{dx_2}{x_2}+ s \frac{dx_1}{x_1}\wedge \frac{dx_3}{x_3}+ t \frac{dx_2}{x_2}\wedge \frac{dx_3}{x_3}\\
&=  r \, d v_1\wedge dv_2+ s\,  d v_1\wedge d v_3+ t \, d v_2\wedge d v_3,
\end{align*}
where $v_i=\log x_i, i=1,2,3$.
 This  form can be written as
$$\omega = d (v_2+s/r v_3)\wedge d (-rv_1+ t v_3)= d\log\left(x_2x_3^{s/r}\right)\wedge d\log\left(\frac{x_3^{t}}{x_1^r}\right).
$$
We note that  the above expression was obtained by considering a basis of $\operatorname{range}\, (B)$ formed by  the first two rows of $B$. More precisely,  by the first row divided by $r$ and the second row, that is the vectors $(0,1, s/r)$ and $(-r,0,t)$. This choice of basis corresponds precisely to the choice in the proof of the aforementioned E. Cartan's Theorem. 

Taking
\begin{equation}\label{redinv3p}
\nonumber
 y_1= x_2x_3^{s/r}, \quad y_2=\frac{x_3^{t}}{x_1^r}
\end{equation}
the form $\omega$ reduces to the canonical log symplectic form $\omega_0$ in \eqref{omega0}, 
and the  map $\pi$ in Theorem~\ref{reducth} is 
$$\pi(x_1,x_2, x_3)= \left (x_2x_3^{s/r},\frac{x_3^{t}}{x_1^r}\right ).$$ 
Using the commutativity of the diagram  in  the same theorem, the reduced map $\hat{\varphi}$  is given by
$$\hat{\varphi} (y_1,y_2) = (\hat{\varphi}_1, \hat{\varphi}_2) = (y_1\circ\varphi,y_2\circ\varphi) .$$

For the sake of simplicity of the reduced map's expression,  we now reduce the dynamical system considered in Remark \ref{rem3} instead of reducing the system \eqref{dyn3per}. In fact, the proof of  Theorem 1 of \cite{InEs2} shows that the reduction theorem still applies to the map \eqref{per3not}, that is to the map
$$\varphi(x_1,x_2,x_3)=\left(\frac{x_2^rx_3^s}{x_1}, \frac{x_2^{r^2-1}x_3^{t+ rs}}{x_1^r},\frac{x_2^{t r^2-t+r s}x_3^{t^2+trs+s^2-1}}{x_1^{s+t r}}\right).$$
After some algebraic  manipulations we obtain
\begin{align*}
\hat{\varphi}_1&=y_1\circ\varphi= y_1^{\frac{r^3-r+r s^2+r^2 s t- st}{r}}y_2^{\frac{s^2+t r s+r}{r^2}}\\
&\\
\hat{\varphi}_2&=y_2\circ\varphi=y_1^{trs+r^2t^2-t^2-r^2}y_2^{\frac{t^2 r+ s t-r}{r}}.
\end{align*}
 Note that $\hat{\varphi}$ depends only on $(y_1,y_2)$ as it should. We also remark that although the original map is a rational map, the reduced map is not necessarily rational. 
 
\item[ii)] {\bf Reduction of the  2-periodic quiver in C)}

We now consider the 2-periodic quiver with 5 nodes represented by the matrix \eqref{B5nodes2per}.  The rank of $B$ is equal to 4 unless $(r,s)=(1,1)$, in which case the quiver would be 1-periodic. To keep the expressions simpler, we will consider the case $r=1$ and $s\neq 1$. The general situation is done in an entirely analogous way. In this case the iteration map $\varphi$ is
\begin{equation}\label{ite5nodes2per}
\nonumber
\varphi(x_1,x_2,x_3,x_4,x_5) = (x_3,x_4, x_5, \underbrace{\frac{x_2x_5^s+x_3x_4}{x_1}}_{x_6}, \underbrace{\frac{x_3^s x_6+x_4x_5}{x_2}}_{x_7}).
\end{equation}
Considering the first row  of $B$ divided by $-1$ and its second row, respectively the vectors $(0,1,-1,-1,s)$ and $(1,0,-(1+s),0,1)$, we construct the form
$$\tilde{\omega}= d(v_2-v_3-v_4+s v_5)\wedge d(v_1-(1+s)v_3+v_5),
$$
where $v_i=\log x_i$. The standard log presymplectic form $\omega$ has rank 4 and $\omega-\tilde{\omega}$  is a  form of rank 2. More precisely
$$\omega-\tilde{\omega} = -(s^2+s-2) dv_3\wedge dv_5,$$
and therefore
\begin{equation}\label{omega5nos}
\omega = d\left(v_2-v_3-v_4+sv_5\right)\wedge d\left(v_1-(1+s)v_3+v_5\right)-(s^2+s-2) dv_3\wedge dv_5.
\end{equation}
So, taking 
\begin{equation}\label{invariantescinco}
\nonumber
y_1=\frac{x_2x_5^s}{x_3 x_4},\quad y_2=\frac{x_1x_5}{x_3^{1+s}},\quad
y_3=\frac{1}{x_3^{s^2+s-2}},\quad y_4= { x_5},
\end{equation}
it is  easy to see that  \eqref{omega5nos}  reduces to the canonical log symplectic form $\omega_0$. Computing the reduced symplectic map $\hat{\varphi}=(\hat{\varphi}_1, \hat{\varphi}_2, \hat{\varphi}_3, \hat{\varphi}_4)$, with $\hat{\varphi}_i=y_i\circ\varphi$, we obtain
\begin{align*}
\hat{\varphi}_1&
=\frac{(1+y_1+y_2)^s}{y_1^sy_2^{s-1} (1+y_1)} y_4^{s^2+s-2}\\
\hat{\varphi}_2&
=\frac{1+y_1+y_2}{y_1y_2}\\
\hat{\varphi}_3&
=\frac{1}{y_4^{s^2+s-2}}\\
\hat{\varphi}_4&
=\frac{ 1+y_1+y_2}{y_1y_2}y_3^{\frac{1}{s^2+s-2}} y_4^{1+s}.
\end{align*}
\end{itemize}

\subsection{Reduction via Poisson Structures}

Poisson structures  which are compatible with a  cluster algebra were introduced in \cite{GeShVa2} and have been applied,  for instance,  to Grassmannians,  to  directed networks  and even to the theory of integrable systems such as Toda flows in $GL_n$ (see for instance \cite{GeShVa3}).
 A Poisson structure is  said to be compatible with a cluster algebra if in any set of cluster variables, the Poisson bracket is given by the simplest possible kind of homogeneous quadratic bracket. 

In the context of reduction of discrete dynamical systems arising from periodic quivers, the relevant notion is not the compatibility of the Poisson structure with the cluster algebra  but that of  a Poisson structure for which the {\it iteration map $\varphi$ is a Poisson map} as we will show in Theorem~\ref{prop1P} below. Although the existence of such a Poisson structure is not guaranteed for arbitrary periodic quivers, we show that it exists for the 2-periodic quivers with an even number of nodes presented in Section~\ref{sec3}.

We consider Poisson brackets of the form
\begin{equation}
\label{PS log}
\{ x_i, x_j\} = c_{ij} x_i x_j, \quad i,j\in \{1,2,\ldots , N\}
\end{equation}
where $C=[c_{ij}]$ is a skew-symmetric matrix and $\mathbf{x}=(x_1,\ldots, x_N)$ is the initial cluster. Recall that a  map $\varphi: M \rightarrow M$ is said to be a Poisson map with respect to a Poisson bracket $\{\, ,\, \}$ (or the bracket $\{\, ,\, \}$ is said to be invariant under $\varphi$) if 
$$\{ f\circ \varphi, g\circ \varphi \} = \{ f, g\} \circ \varphi, \quad \forall f,g \in C^\infty(M).$$
Given a Poisson structure for which the iteration map $\varphi$ is a Poisson map, the 
functions $y_i$ (known as {\it Casimir functions}) such that
$$\{ y_i, f\} = 0, \quad \forall f\in C^\infty(M)$$
 will provide the "reduced" variables.
The next theorem concerning general Poisson structures and the lemma following it fully justify the reduction of our iteration maps via Poisson structures. 

\begin{theorem}\label{prop1P}
Let $\varphi: M \rightarrow M$ be a differentiable map with differentiable inverse, and  $\{ \, , \}$  a Poisson structure on $M$ of non maximal constant rank invariant under $\varphi$.
If  $\{ y_1, \ldots , y_{k}\}$ is a maximal independent set of Casimir functions for $\{ \, , \}$,  then there are functions $\hat{\varphi}_1, \ldots , \hat{\varphi}_{k}$ such that
$$y_i\circ\varphi = \hat{\varphi}_i(y_1,\ldots , y_k), \quad i=1,\ldots ,k.$$
\end{theorem}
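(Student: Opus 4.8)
The plan is to exploit the defining property of a Poisson map together with the invertibility of $\varphi$ to show that each $y_i\circ\varphi$ is itself a Casimir function, and then to invoke the local normal form of a constant‑rank Poisson manifold to conclude that any Casimir is, locally, a function of the maximal independent set $y_1,\ldots,y_k$.

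\medskip

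\textbf{Step 1: $y_i\circ\varphi$ is a Casimir.} Let $f\in C^\infty(M)$ be arbitrary. Since $\varphi$ is a diffeomorphism we may write $f=\big(f\circ\varphi^{-1}\big)\circ\varphi$; setting $g=f\circ\varphi^{-1}\in C^\infty(M)$ and using the invariance $\{h\circ\varphi, h'\circ\varphi\}=\{h,h'\}\circ\varphi$ we get
\[
\{\, y_i\circ\varphi,\ f\,\} \;=\; \{\, y_i\circ\varphi,\ g\circ\varphi\,\} \;=\; \{\, y_i,\ g\,\}\circ\varphi \;=\; 0,
\]
because $y_i$ is a Casimir. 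As $f$ was arbitrary, $y_i\circ\varphi$ is a Casimir. This is exactly the place where the invertibility of $\varphi$ is used; without it one could not realise an arbitrary $f$ as a pullback by $\varphi$.

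\medskip

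\textbf{Step 2: every Casimir is a function of $y_1,\ldots,y_k$.} Suppose the bracket has constant rank $2\rho$ with $2\rho<\dim M=:n$; then its corank is $n-2\rho$, and since $\{y_1,\ldots,y_k\}$ is a \emph{maximal} independent set of Casimirs we have $k=n-2\rho\ge 1$. By the splitting theorem for Poisson manifolds of constant rank, about each point of $M$ there are local coordinates $(q_1,\ldots,q_\rho,p_1,\ldots,p_\rho,z_1,\ldots,z_k)$ in which the bracket is the canonical symplectic one in the $(q,p)$ variables and the $z_j$ are Casimirs; consequently a smooth function is a Casimir near that point precisely when it depends only on $(z_1,\ldots,z_k)$. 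The $k$ functions $y_1,\ldots,y_k$ are independent Casimirs, hence $(z_1,\ldots,z_k)\mapsto(y_1,\ldots,y_k)$ is a local diffeomorphism and every Casimir is locally a function of $y_1,\ldots,y_k$ alone. Applying this to the Casimir $y_i\circ\varphi$ produced in Step 1 yields functions $\hat{\varphi}_i$ with $y_i\circ\varphi=\hat{\varphi}_i(y_1,\ldots,y_k)$; on a connected domain where $(y_1,\ldots,y_k)$ is a submersion with connected fibres these local expressions patch into a single $\hat{\varphi}_i$.

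\medskip

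I expect the only delicate point to be this second step — the passage from ``is a Casimir'' to ``is a function of the chosen maximal independent set.'' This is where the hypothesis of \emph{constant} (not merely non‑maximal) rank is essential, via the Poisson splitting theorem, and where, strictly speaking, the conclusion is a priori local; for the concrete maps $\varphi$ of Section~\ref{sec3} the Casimirs $y_i$ turn out to be explicit monomials in the $x_i$, so the level sets of $(y_1,\ldots,y_k)$ are well behaved and the $\hat{\varphi}_i$ are globally defined, as confirmed by the explicit computations in the applications.
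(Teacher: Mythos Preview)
Your proof is correct and follows the same approach as the paper: Step~1 is identical to the paper's one-line computation, and Step~2 reaches the same conclusion that the paper states more tersely by saying only that ``the hypothesis on the set $\{y_1,\ldots,y_k\}$ imply that each Casimir is a function of $y_1,\ldots,y_k$''. Your invocation of the splitting theorem and your remarks on locality versus globality flesh out what the paper leaves implicit, but the structure of the argument is the same.
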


\begin{proof}
As $\varphi$ is a Poisson map each function $y_i\circ\varphi$ is again a Casimir:
$$\{ y_i \circ\varphi, f\} = \{ y_i, f\circ\varphi^{-1}\}\circ\varphi  =0.$$
As the set of Casimir functions is the center of the Lie algebra $(C^\infty(M), \{ \, , \})$, the hypothesis on the set $\{ y_1, \ldots , y_{k}\}$ imply that each Casimir is a function of $y_1,\ldots, y_{k}$. 
\end{proof}

In the particular case where $\{ \, ,\}$ is the homogeneous quadratic Poisson structure  \eqref{PS log}, 
the bracket  $\{ \, ,\}$ has constant rank and so the theorem can be applied whenever the rank is not maximal. Moreover the next lemma provides  a  very simple way of computing an independent set of Casimirs. 
\begin{lemma}\label{lema1}
Let $\mathbf{k}=(k_1, \ldots, k_N)\in\mathbb{Z}^N$, $\mathbf{x}=(x_1,\ldots,x_N)$ and $\mathbf{x}^\mathbf{k}=x_1^{k_1}x_2^{k_2}\cdots x_N^{k_N}$. Then $\mathbf{x}^\mathbf{k}$ is a Casimir for the Poisson bracket 
$$\{x_i,x_j\}=c_{ij} x_ix_j$$
if and only if $\mathbf{k}\in \ker C$, where $C=[c_{ij}]$.
\end{lemma}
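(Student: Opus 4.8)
The plan is to compute the Poisson bracket of a monomial $\mathbf{x}^{\mathbf{k}}$ with each generator $x_j$ using the Leibniz rule, and to translate the Casimir condition into a linear-algebraic condition on $\mathbf{k}$. First I would recall that a function $y$ is a Casimir of $\{\,,\,\}$ if and only if $\{y, x_j\} = 0$ for every $j$, since the coordinate functions $x_1,\ldots,x_N$ generate $C^\infty(M)$ as a Poisson algebra (more concretely, the Hamiltonian vector field of $y$ vanishes iff it kills each coordinate). So it suffices to analyse $\{\mathbf{x}^{\mathbf{k}}, x_j\}$.

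The key computation is the following: writing $\mathbf{x}^{\mathbf{k}} = \prod_i x_i^{k_i}$ and applying the derivation property of $\{\,\cdot\,, x_j\}$ together with $\{x_i, x_j\} = c_{ij} x_i x_j$, one gets
\begin{equation}
\nonumber
\{\mathbf{x}^{\mathbf{k}}, x_j\} = \sum_{i=1}^N k_i\, \mathbf{x}^{\mathbf{k}}\, x_i^{-1}\, \{x_i, x_j\} = \left(\sum_{i=1}^N k_i c_{ij}\right) \mathbf{x}^{\mathbf{k}}\, x_j = (\mathbf{k}^\top C)_j\, \mathbf{x}^{\mathbf{k}}\, x_j.
\end{equation}
Since $\mathbf{x}^{\mathbf{k}} x_j$ never vanishes on the domain (the cluster variables are positive), this expression is zero for all $j$ if and only if $\mathbf{k}^\top C = 0$, i.e. $\mathbf{k} \in \ker C$ (using skew-symmetry of $C$ to identify $\ker C$ with the left kernel). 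This establishes both implications at once.

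The main obstacle, such as it is, is purely bookkeeping: one must justify the term-by-term differentiation of the product $\mathbf{x}^{\mathbf{k}}$ when some $k_i$ are negative or zero, which is immediate on the open set where all $x_i \neq 0$, and one must be careful that "Casimir" is correctly reduced to the vanishing of brackets against coordinates rather than against all smooth functions — this follows because any $f$ can be written locally in terms of the $x_i$ and $\{\,\cdot\,, f\}$ is determined by its values on a generating set via the Leibniz rule. Once these points are in place the proof is essentially the one-line computation above, so I would present it compactly.
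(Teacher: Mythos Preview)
Your proof is correct and is essentially the same argument as the paper's: both compute the bracket of $\mathbf{x}^{\mathbf{k}}$ against a coordinate via the Leibniz rule and identify the resulting scalar factor with an entry of $C\mathbf{k}$ (you frame it as $\mathbf{k}^\top C$ and invoke skew-symmetry, which is equivalent). The paper omits your remarks about reducing the Casimir condition to coordinates and about negative exponents, but the core computation is identical.
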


\begin{proof} For any $i\in \{1, \ldots, N\}$ we have
\begin{align*}
\{x_i, \mathbf{x}^\mathbf{k}\}&=\sum_{j=1}^N c_{ij} k_j\mathbf{x}^\mathbf{k} x_i= \mathbf{x}^\mathbf{k} x_i \sum_{j=1}^N  c_{ij}k_j = \mathbf{x}^\mathbf{k} x_i \left( C\mathbf{k}\right )_i.
\end{align*}
Thus $\mathbf{x}^\mathbf{k}$ is a Casimir if and only if  $C\mathbf{k}=\mathbf{0}$.
\end{proof}

We now reduce the iteration map $\varphi$ of the 2-periodic quivers with an even number of nodes presented in Section~\ref{sec3}, by providing a Poisson structure invariant under $\varphi$. To the best of our knowledge this structure is new in both examples.

\begin{itemize}
\item[iii)] {\bf Reduction of  a 2-periodic quiver in A)}

For the 2-periodic quiver represented in Figure~\ref{per24} and given by  the matrix $B$ in \eqref{mat4nos} there exists a Poisson bracket of the form \eqref{PS log} for which the iteration map $\varphi$ is a Poisson map. In fact, let us consider the Poisson bracket \eqref{PS log} 
with coefficient matrix $C=[c_{ij}]$ given by
 \begin{equation}\label{poisson2p4nos}
 \nonumber 
C=\begin{bmatrix}
0&r&-p&t\\
-r&0&t&-s\\
p&-t&0&r\\
-t&s&-r&0
\end{bmatrix}.
\end{equation}
One can check that the iteration map $\varphi$ in \eqref{map4nos2per} satisfies the relation $\{x_i\circ\varphi\, ,\, x_j\circ\varphi\}=\{x_i,x_j\}\circ\varphi $ and so $\varphi$ is a Poisson map for this  structure.

Both matrices $B$ and $C$ have the same determinant
\begin{equation}\label{det4nosnova}
\nonumber
\det C=\det B= (r^2+t^2-p s)^2,
\end{equation}
therefore $\varphi$  can be reduced to a map in two variables when $\det B=0$. We now choose $(r,s,t,p)=(1,5,3,2)$  which gives $\det B=0$. The respective iteration map is
$$
\varphi(x_1,x_2,x_3,x_4)=(x_3,x_4,\underbrace{\frac{x_2x_3^5x_4^3+1}{x_1}}_{x_5},\underbrace{\frac{x_3^3x_4^2x_5+1}{x_2}}_{x_6}).
$$
A basis  for $\ker C$ is $\{(-5,-3,0,1), (3,2,1,0)\}$ and so by Lemma~\ref{lema1}  the following Casimirs can be taken as  reduced variables:
$$y_1 = \frac{x_4}{x_1^5x_2^3}, \quad y_2 = x_1^3x_2^2x_3.$$
Doing the usual computations we obtain the reduced iteration map
\begin{align*}
\hat{\varphi}_1&=y_1\circ \varphi 
=\frac{1+y_1^2y_2^3(1+y_1^3y_2^5)}{y_1^3y_2^5},\\
&\\
\hat{\varphi}_2&=y_2\circ \varphi 
= y_1^2y_2^3(1+y_1^3y_2^5).
\end{align*}
We note that in this case the reduced map is rational.

\item[iv)] {\bf Reduction of the 2-periodic quiver B)}

Let us consider the 2-periodic quiver represented by the matrix $B$ in \eqref{matrix62per} with $r\neq t$. The $\operatorname{rank}$ of $B$ is not maximal when $p=r+t$.  In this case $\varphi$ is given by 
\begin{equation}
\nonumber
\varphi(x_1,x_2,\ldots,x_6) =  (x_3,x_4,x_5,x_6,\underbrace{\frac{x_2^rx_4^{r+t}x_6^t+x_3^sx_5^s}{x_1}}_{x_7},\underbrace{\frac{x_3^tx_5^{r+t}x_7^r+x_4^sx_6^s}{x_2}}_{x_8} ),
\end{equation}
and there exists a Poisson bracket of the form \eqref{PS log}  for which $\varphi$ is a Poisson map, namely the Poisson tensor with coefficient matrix
$$
C=\begin{bmatrix}
0&1&0&-1&0&1\\
-1&0&1&0&-1&0\\
0&-1&0&1&0&-1\\
1&0&-1&0&1&0\\
0&1&0&-1&0&1\\
-1&0&1&0&-1&0
\end{bmatrix}.
$$
The matrix $C$ has rank 2 and a basis for its kernel is 
$$\{ (-1,0,0,0,1,0),(0,-1,0,0,0,1), (0,1,0,1,0,0), (0,0,1,0,1,0)\}.$$
Again by Lemma~\ref{lema1},  we can consider the following reduced variables
$$y_1=\frac{x_5}{x_1}, \qquad y_2=\frac{x_6}{x_2}, \qquad y_3=x_2x_4, \qquad y_4=x_3x_5.
$$
So, when $p=r+t$ the computation of the reduced iteration map  gives
\begin{align*}
\hat{\varphi}_1&= y_1\circ \varphi =  \frac{y_1\left (y_2^ty_3^{r+t} + y_4^s\right )}{y_4}\\
\hat{\varphi}_2&= y_2\circ \varphi= \frac{y_1^ry_4^t\left (y_2^ty_3^{r+t} + y_4^s\right )^r + y_2^sy_3^s}{y_3}\\
\hat{\varphi}_3&= y_3\circ \varphi=y_2y_3\\
\hat{\varphi}_4&= y_4\circ \varphi = y_1\left (y_2^ty_3^{r+t} + y_4^s\right )
\end{align*}
 and again the reduced map is  rational.
\end{itemize}

\subsection{Conclusions}

The presymplectic approach to reduction can be used whenever $\operatorname{rank} B$ is not maximal, which is not necessarily the case with the Poisson approach. For instance, we can show that the only Poisson structure of the form \eqref{PS log} which is invariant under the iteration map $\varphi$ in \eqref{map5nos2per} is the zero structure. 

Still, when both approaches are available, the Poisson reduction produces simpler expressions for the reduced iteration map. 

\small{}

\end{document}